\theoremstyle{remark}
\newtheorem{remark}{Remark}[section]
\theoremstyle{plain}
\newtheorem{theorem}[remark]{Theorem}
\newtheorem{proposition}[remark]{Proposition}
\newtheorem{lemma}[remark]{Lemma}
\theoremstyle{definition}
\let\ge=\varepsilon
\newcommand{\K}{\mathcal{K}}
\begin{document}
    \title{Perturbation results for some
nonlinear equations involving fractional operators}
    \author{Simone Secchi
    \thanks{Partially supported by PRIN
2009 ``Teoria dei punti critici
e metodi perturbativi per equazioni differenziali nonlineari''.}}
\maketitle

\begin{abstract}
\noindent By using a perturbation technique in critical point theory, we prove
the existence of solutions for two types of nonlinear equations involving fractional differential operators.
\vspace{5mm}

\noindent \emph{Keywords:} Perturbation methods, pseudo-relativistic Hartree equation, fractional laplacian.

\noindent \emph{AMS Subject Classification:} 35Q55, 35A15, 35J20
\end{abstract}

    \section{Introduction}
    
    In this note we propose a few existence results for solutions to nonlinear elliptic equations driven by fractional operators. We will focus on two models: a pseudo-relativistic Hartree equation, and an equation involving the fractional laplacian. We refer to the next sections for more details on these problems. 
    
Our approach relies on a perturbation technique in Critical Point Theory introduced some years ago by Ambrosetti and his collaborators. It is very useful when dealing with perturbation problems with lack of compactness. For the reader's sake, we collect in this Introduction the main ingredients of this method. The interested reader will find the complete theory in the book \cite{AM}.

\subsection{The abstract setting}

Consider a (real) Hilbert space $H$, and a family $\{f_\ge\}$ of functionals $f_\ge \colon H \to \mathbb{R}$ of class $C^2$. We assume that
\begin{equation}
f_\ge = f_0 + \ge G,
\end{equation}
where $G \colon H \to \mathbb{R}$ and $f_0 \colon H \to \mathbb{R}$ satisfies
\begin{itemize}
\item[($h_1$)] $f_0 \in C^2(H)$ has a smooth manifold $Z$ of dimension $d < \infty$, such that $f'_0(z)=0$ for every $z \in Z$.
\item[($h_2$)] The linear operator $f_0''(z)$ is a Fredholm operator of index zero, for every $z \in Z$.
\item[($h_3$)] $\ker f_0''(z) = T_z Z$ for every $z \in Z$. Here $T_z Z$ stands for the tangent space at $z$ to the manifold $Z$.
\end{itemize}
If we look for critical points of $f_\ge$ as $\ge \to 0$, i.e. for points $u \in H$ with
\begin{equation}
f_\ge'(u)=0,
\end{equation}
we can \emph{deform} the manifold $Z$ to a new manifold $Z_\ge$ in such a way that $Z_\ge$ is a \emph{natural constraint} for $f_\ge$. Let us briefly recall the construction.

By the Implicit Function Theorem, we can construct a function $w = w(z,\ge)$ with values in $Z$ and such that 
\begin{enumerate}
\item $w(z,0)=0$ for every $z \in Z$;
\item $f_\ge'(z+w(z,\ge))\in T_z Z$ for every $z \in Z$;
\item $w(z,\ge) \in \left( T_z Z \right)\sp\perp$ for every $z \in Z$.
\end{enumerate}
It follows without effort that $w=O(\ge)$ as $\ge \to 0$.
Then we introduce the perturbed manifold
\[
Z_\ge = \left\{ u = z+w(z,\ge) \mid z \in Z \right\}
\]
\begin{lemma}
$Z_\ge$ is a natural constraint for $f_\ge$: if $u =z+w(z,\ge)\in Z_\ge$ and $f'_{\ge \vert Z_\ge}(u)=0$, then $f_\ge'(u)=0$.
\end{lemma}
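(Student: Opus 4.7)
The plan is to describe the tangent space to $Z_\ge$ explicitly via the parametrization $z\mapsto z+w(z,\ge)$ and then combine the constrained critical point condition with property~(2) of $w$ to trap $f_\ge'(u)$ into a vector that is simultaneously in $T_z Z$ and orthogonal to $T_z Z$ --- hence zero.

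First I would note that, for $\ge$ small, the map $\Phi_\ge\colon Z\to Z_\ge$, $\Phi_\ge(z)=z+w(z,\ge)$, is a local diffeomorphism, since its differential $\mathrm{Id}+\partial_z w(z,\ge)$ is a perturbation of the identity: indeed $w(z,0)\equiv 0$ forces $\partial_z w(z,0)\equiv 0$, so $\partial_z w(z,\ge)=O(\ge)$. Consequently, for $u=z+w(z,\ge)$,
\[
T_u Z_\ge = \bigl\{\, v + \partial_z w(z,\ge)\, v \,\bigm|\, v\in T_z Z \,\bigr\}.
\]
The hypothesis $f'_{\ge\vert Z_\ge}(u)=0$ then reads $\langle f_\ge'(u),\,v+\partial_z w(z,\ge)v\rangle=0$ for every $v\in T_z Z$, while property~(2) of $w$ says $\xi:=f_\ge'(u)\in T_z Z$.

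Since $\xi\in T_z Z$, I rewrite the orthogonality as $\langle \xi,L_\ge v\rangle=0$ for every $v\in T_z Z$, where
\[
L_\ge v := v + P_{T_z Z}\bigl(\partial_z w(z,\ge) v\bigr)
\]
and $P_{T_z Z}$ denotes the orthogonal projection onto $T_z Z$. The estimate $\partial_z w=O(\ge)$ makes $L_\ge\colon T_z Z\to T_z Z$ invertible for $\ge$ small, so choosing $v=L_\ge^{-1}\xi$ yields $\|\xi\|^2=0$; hence $f_\ge'(u)=0$.

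The only delicate step is the bound $\partial_z w(z,\ge)=O(\ge)$ itself. It is a by-product of the implicit function theorem used to construct $w$ (whose invertibility input is precisely $(h_2)$ and $(h_3)$): differentiating in $z$ the defining relation of $w$ and specialising at $\ge=0$ gives $\partial_z w(z,0)=0$, from which $C^1$-dependence on $\ge$ yields the claimed $O(\ge)$ estimate, and this is where the argument tacitly requires $\ge$ to be sufficiently small.
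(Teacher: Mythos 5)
Your proof is correct and follows essentially the same route as the paper's: constrained criticality gives orthogonality of $f_\ge'(u)$ to $T_u Z_\ge$, property (2) of $w$ places $f_\ge'(u)$ in $T_z Z$, and the closeness of $T_u Z_\ge$ to $T_z Z$ for small $\ge$ forces the vector to vanish. You have merely made precise (via the explicit parametrization of $T_u Z_\ge$, the projected operator $L_\ge$, and the $O(\ge)$ bound on $\partial_z w$) what the paper states in one line as ``$T_u Z_\ge$ is close to $T_z Z$''.
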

\begin{proof}
By assumption $f_\ge'(u)$ is orthogonal to $T_u z_\ge$. From the properties of $w$ it follows that $f_\ge'(u) \in T_z Z$, and $T_u Z_\ge$ is close to $T_z Z$ when $\ge$ is small. Therefore $f_\ge'(u)=0$.
\end{proof}
This Lemma allows us to replace the problem $f_\ge'(u)=0$ with the \emph{finite-dimensional} problem $f'_{\ge \vert Z_\ge}(u)=0$, $u \in Z_\ge$.

An expansion with respect to $\ge$,
\begin{align*}
f_\ge(z+w(z,\ge)) &= f_0(z+w(z,\ge))+\ge G(z + w(z,\ge)) \\
&= f_0(z) + \ge G(z) + o(\ge),
\end{align*}
shows the following existence result.\footnote{We recall that a critical point $u$ of a functional $f$ is called \emph{stable} whenever each functional $g$, sufficiently close to $f$ in the $C^1$-norm, has itself a critical point. See \cite{YYL}}
\begin{proposition} \label{prop:1.2}
Under our general assumptions ($h_1$), ($h_2$) and ($h_3$), the functional $f_\ge$ has at least one critical point, provided that $G_{\vert Z}$ has a stable critical point. In particular, this happens whenever there exist an open set $A \subset Z$ and a point $z_0 \in A$ such that
\[
G(z_0) < \inf_{\partial A} G \quad \text{(or $G(z_0) > \sup_{\partial A} G$})
\]
\end{proposition}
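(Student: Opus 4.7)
The plan is to combine the preceding lemma with the stated expansion to reduce the problem to one about a single real-valued function on the finite-dimensional manifold $Z$, and then to use the stability of the reduced critical point.

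First, I would use the parameterization $z \mapsto z+w(z,\ge)$ to identify $Z_\ge$ with $Z$ and define
\[
\Phi_\ge(z) := f_\ge(z+w(z,\ge)), \qquad z \in Z.
\]
The lemma tells us that any critical point of $\Phi_\ge$ on $Z$ yields a critical point of $f_\ge$ on $H$, so the task becomes to produce a critical point of $\Phi_\ge$. The displayed expansion gives
\[
\Phi_\ge(z) = f_0(z) + \ge G(z) + o(\ge).
\]
Since $f_0'(z)=0$ for every $z\in Z$, the function $f_0\vert_Z$ is locally constant; on any connected component it equals some constant $c$. Hence
\[
\frac{\Phi_\ge(z) - c}{\ge} = G(z) + o(1) \qquad (\ge \to 0).
\]

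The key step is to verify that this expansion is not merely pointwise but $C^1$ on $Z$, uniformly on bounded subsets. This comes from $w(z,\ge)=O(\ge)$ together with the corresponding estimate for the derivative $\partial_z w(z,\ge)=O(\ge)$, which in turn is obtained by differentiating the defining equation of $w$ given by the Implicit Function Theorem. Once this is in hand, the rescaled functional $\ge^{-1}(\Phi_\ge-c)$ converges to $G\vert_Z$ in $C^1_{\mathrm{loc}}(Z)$, and so by the very definition of stability recalled in the footnote, any stable critical point of $G\vert_Z$ gives rise, for $\ge$ small enough, to a critical point of $\Phi_\ge$ on $Z$.

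For the sufficient condition in the second half of the statement, I would argue directly without invoking the abstract stability notion: if $A \subset Z$ is open with compact closure and $G(z_0) < \inf_{\partial A} G$ at some $z_0 \in A$, then by uniform $C^0$ convergence of $\ge^{-1}(\Phi_\ge - c)$ to $G$ on $\overline{A}$ we still have, for $\ge$ small,
\[
\Phi_\ge(z_0) < \inf_{\partial A} \Phi_\ge,
\]
so $\Phi_\ge$ attains its minimum on $\overline{A}$ at an interior point of $A$, which is the desired critical point of $\Phi_\ge$. The dual case $G(z_0) > \sup_{\partial A} G$ is symmetric, via a maximum instead of a minimum. The main obstacle in the whole argument is really the $C^1$-closeness in the first step; the rest is a soft topological consequence.
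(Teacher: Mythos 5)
Your argument is correct and follows exactly the route the paper sketches (the natural-constraint lemma plus the expansion $f_\ge(z+w(z,\ge))=f_0(z)+\ge G(z)+o(\ge)$, with the details deferred to \cite{AM}): you rightly identify that the real content is the $C^1$-closeness of $\ge^{-1}(\Phi_\ge-c)$ to $G_{\vert Z}$, which requires $\partial_z w(z,\ge)=O(\ge)$ from differentiating the implicit-function equation. The only point worth flagging is that your direct min--max argument for the second half needs $\overline{A}$ compact (or at least $G$ to be coercive enough on $A$) for the infimum to be attained in the interior, an assumption the statement leaves implicit but which you correctly supply.
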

For a neat exposition of the complete theory we refer to the book \cite{AM}, where the interested reader will find many applications and generalizations.

\section{Pseudo-relativistic Hartree equations}

As a first application of the general principle, we study a class of equations involving a fractional differential operator:
\begin{equation} \label{eq:prh}
\sqrt{-\Delta + m^2} \ u + \mu u = \left( |x|^{-1} * |u|^2 \right) \left( 1+\ge g(x) \right)u \quad \text{in $\mathbb{R}^3$}.
\end{equation}
We suppose that $\mu>0$ is a given parameter.
This equation, when $\ge =0$, is related to models from astrophysics and boson stars (see \cite{CZNrend,CZN} for references). For the reader's sake, we recall that the operator
\[
\sqrt{-\Delta + m^2}
\]
can be defined on $f \in H^1(\mathbb{R}^3)$ by the following formula:
\[
\mathcal{F}\left( \left( \sqrt{-\Delta + m^2} \, f \right) \right) \left( \xi \right) = \sqrt{|\xi|^2+m^2} \, \mathcal{F}f (k),
\]
where $\mathcal{F}$ is the Fourier transform and $\xi \in \mathbb{R}^3$. An alternative approach to this fractional operator is through a \emph{local realization}: given any $u \in C_0^\infty(\mathbb{R}^3)$, there exists a unique solution $v \in C_0^\infty(\mathbb{R}_{+}^{3+1})$ of the Dirichlet problem
\begin{equation}
\begin{cases}
-\Delta v + m^2 y = 0 &\text{in $\mathbb{R}_{+}^{3+1}$} \\
v(0,y) = u(y) &\text{for $y \in \mathbb{R}^3 = \partial \mathbb{R}_{+}^{3+1}$}.
\end{cases}
\end{equation}
Here $\mathbb{R}_{+}^{3+1} = \left\{ (x,y) \mid x \in \mathbb{R}^3, \ y>0 \right\}$. Setting 
\[
Tu(y) = -\frac{\partial v}{\partial x} (0,y),
\]
the function $w(x,y) = -\frac{\partial v}{\partial x}(x,y)$ solves the problem
\begin{equation}
\begin{cases}
-\Delta w + m^2 w = 0 &\text{in $\mathbb{R}_{+}^{3+1}$} \\
w(0,y) = Tu(y) = -\frac{\partial v}{\partial x}(0,y) &\text{for $y \in \mathbb{R}^3 = \partial \mathbb{R}_{+}^{3+1}$},
\end{cases}
\end{equation}
and this implies that 
\[
T(Tu)(y) = -\frac{\partial w}{\partial x}(0,y) = \frac{\partial^2 w}{\partial x^2} (0,y) = \left(-\Delta_y v + m^2 v \right)(0,y)
\]
and therefore $T^2 = \left(-\Delta_y v + m^2 v \right)$. 

Since the group of translations acts on solutions to (\ref{eq:prh}), we face here a non-trivial lack of compactness: generally speaking, Palais-Smale sequences need not be (relatively) compact. Let us see how the perturbation technique presented in the Introduction may help us to overcome this issue.

To embed this problem into our abstract scheme, we set $H = H^{\frac12}(\mathbb{R})$,
the usual Sobolev space of fractional order that can also be seen as the trace space of $H^1(\mathbb{R}^{3+1}_{+})$, and
\begin{equation*}
f_\ge (u) = 
\frac{1}{2}\int_{\mathbb{R}^3} \left( \sqrt{-\Delta + m^2} \ |u|^2 + \mu |u|^2 \right)- \frac{1}{4} \int_{\mathbb{R}^3} \left( |x|^{-1} * |u|^2 \right) \left( 1 + \ge g(x) \right) |u|^2 \, dx.
\end{equation*}
We assume that $g \in  L^\infty (\mathbb{R}^3)$. Since the convolution kernel $x \mapsto |x|^{-1}$ belongs to the Lorentz space $L_w^3(\mathbb{R}^3)$, we can invoke the following \emph{weak Young inequality} to conclude that $f_\ge$ is well-defined:
\[
\int_{\mathbb{R}^3} \left( |x|^{-1} * |u|^2 \right)|u|^2 \leq C \||x|^{-1}\|_{L_w^3} \|u\|_{L^2} \|u\|_{L^{3}},
\]
where $C>0$ is a universal constant independent of $u \in H^{\frac12}(\mathbb{R}^3)$. We recall that the Lorentz space (or weak $L^3$ space)  $L_w^3(\mathbb{R}^3)$ is the set of those functions $f$ for which the quasi-norm\footnote{$\mathcal{L}$ denotes the Lebesgue measure in $\mathbb{R}^3$.}
\[
\|f \|_{L_w^3}^3 = \sup_{t>0} t^3 \mathcal{L}\left( \left\{ x \in \mathbb{R}^3 \mid |f(x)|>t \right\} \right)
\]
is finite. Moreover, we remark that $|\cdot |^{-1} \in L^r(\mathbb{R}^3) + L^\infty (\mathbb{R}^3)$, as is immediately seen by writing, for $R>0$,
\[
\frac{1}{|x|} = \frac{1}{|x|} \chi_{B(0,R)} + \frac{1}{|x|} \chi_{\mathbb{R}^3 \setminus B(0,R)}.
\]
It is also easy to check that $f_\ge \in C^2(H)$. For our setting we define
\begin{align*}
f_0 (u) &= \frac{1}{2}\int_{\mathbb{R}^3} \left( \sqrt{-\Delta + m^2} \ |u|^2 + \mu |u|^2 \right) - \frac{1}{4} \int_{\mathbb{R}^3} \left( |x|^{-1} * |u|^2 \right) |u|^2 \, dx \\
G(u) &= - \frac{1}{4} \int_{\mathbb{R}^3} \left( |x|^{-1} * |u|^2 \right) g(x) |u|^2 \, dx
\end{align*}
Let us recall some important facts proved in \cite{EL}. If
\[
\mathcal{E}(u) = \frac{1}{2}\int_{\mathbb{R}^3} \sqrt{-\Delta + m^2} \ |u|^2  - \frac{1}{4} \int_{\mathbb{R}^3} \left( |x|^{-1} * |u|^2 \right) |u|^2 \, dx,
\]
we consider the variational problem 
\begin{equation*}
E(N) =
 \inf \left\{ \mathcal{E}(u) \mid u\in H^{\frac12}(\mathbb{R}^3), \  \int_{\mathbb{R}^3} |u|^2 = N \right\}.
\end{equation*}
Then there exists a number (called the Chandrasekar mass) $N_c>4/\pi$ such that
\begin{itemize}
\item $E(N)$ is attained if and only if $0<N<N_*$; the corresponding minimizer $Q \in H^{\frac12}(\mathbb{R}^3)$ solves  
\[
\sqrt{-\Delta + m^2} \ Q + \mu Q = \left( |x|^{-1} * |Q|^2 \right) Q 
\]
for some Lagrange multiplier $\mu \in \mathbb{R}$.
\item Any minimizer $Q$ of $E(N)$ belongs to $H^s(\mathbb{R}^3)$ for all $s \geq 0$ and $Q$ decays exponentially fast at infinity.
\item Any minimizer $Q$ is radially decreasing and positive everywhere. 
\item If $N \ll 1$, then there exists one and only one minimizer $Q$ (up to translations), which is \emph{non-degenerate} in the following sense: the linearized operator
\[
L_{+} \xi = \left( \sqrt{-\Delta + m^2} + \mu \right) \xi - \left( |x|^{-1}*|Q|^2 \right)\xi - 2Q \left( |x|^{-1}*(Q\xi) \right)
\]
satisfies the condition
\[
\ker L_{+} = \operatorname{span} \left\{ \frac{\partial Q}{\partial x_1}, \frac{\partial Q}{\partial x_2}, \frac{\partial Q}{\partial x_3} \right\}.
\]
\end{itemize}
\begin{remark}
The Lagrange multiplier $\mu$ cannot be discarded. From a technical viewpoint this is due to the lack of scaling properties for $\sqrt{-\Delta + m^2}$. But this also reflects the fact that $E(N)$ is attained only if $N$ is strictly smaller than the Chandrasekar mass.
\end{remark}
From this moment we fix $N \ll 1$ and its Lagrange multiplier $\mu$ in such a way that $E(N)$ is uniquely solvable by a non-degenerate element $Q$. 
Without loss of generality, we can assume $m=1$.

We define the manifold
\[
Z = \left\{ Q(\cdot - \xi) \mid \xi \in \mathbb{R}^3 \right\}.
\] 
Since the Euler-Lagrange equation associated to critical points of $f_0$ is invariant under translations, each element of $Z$ is a critical point of $f_0$.
\begin{lemma}
The linear operator $f_0''(z)$ is Fredholm of index zero at every $z \in Z$.
\end{lemma}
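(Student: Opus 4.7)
The plan is to write $f_0''(z)$ as the identity plus a compact operator and then invoke Riesz--Schauder. Since $\mu>0$, the bilinear form $(u,v)\mapsto \int_{\mathbb{R}^3}(\sqrt{-\Delta+1}\,u)\,v\,dx+\mu\int_{\mathbb{R}^3}uv\,dx$ defines an inner product on $H=H^{1/2}(\mathbb{R}^3)$ equivalent to the standard one, and I would equip $H$ with it. Under this choice the quadratic part of $f_0$ is exactly $\tfrac12\|u\|_H^2$, so for every $z\in Z$ the Hessian takes the form $f_0''(z)=\mathrm{Id}+K(z)$, where $K(z)\colon H\to H$ is the Riesz representation of the symmetric bilinear form
\[
B_z(\xi,\eta)=-\int_{\mathbb{R}^3}\bigl(|x|^{-1}*z^2\bigr)\xi\eta\,dx-2\int_{\mathbb{R}^3}\bigl(|x|^{-1}*(z\xi)\bigr)z\eta\,dx.
\]
Because translations act unitarily on $H$, it is enough to establish the claim for $z=Q$.

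The core of the argument is then to prove that $K:=K(Q)$ is compact. I would take $\xi_n\rightharpoonup 0$ in $H$ and show that $\sup_{\|\eta\|_H\le 1}|B_Q(\xi_n,\eta)|\to 0$. For the first term, the potential $V:=|x|^{-1}*Q^2$ is bounded and vanishes at infinity, since $Q^2$ is integrable and Schwartz-like thanks to the exponential decay of $Q$; Rellich's theorem plus a cutoff on the complement of a large ball then yield $\|V\xi_n\|_{L^2}\to 0$, and Cauchy--Schwarz closes this piece uniformly in $\eta$. For the second term, the multiplier $Q$ is used to localize both factors: by H\"older and the exponential decay of $Q$, combined with Rellich, one upgrades $\xi_n\rightharpoonup 0$ to strong convergence $Q\xi_n\to 0$ in $L^{6/5}(\mathbb{R}^3)$; the Hardy--Littlewood--Sobolev inequality then gives $|x|^{-1}*(Q\xi_n)\to 0$ in $L^6(\mathbb{R}^3)$, and a final H\"older estimate against $Q\eta$, which stays bounded in $L^{6/5}$ as $\eta$ varies in the unit ball of $H$, concludes.

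With $K$ shown to be compact, $f_0''(Q)=\mathrm{Id}+K$ is a compact perturbation of the identity, hence Fredholm of index zero by Riesz--Schauder, and the same conclusion transfers to every $z\in Z$ by unitary translation invariance. The main technical obstacle lies in the second, non-local term: the kernel $|x|^{-1}$ by itself does not produce a compact operator on any Sobolev space, and one has to exploit carefully the interplay between the fast decay of $Q$ (which supplies strong convergence of $Q\xi_n$ in a subcritical Lebesgue space) and the smoothing action of the Riesz potential (which moves us from $L^{6/5}$ to $L^6$) in order to recover compactness.
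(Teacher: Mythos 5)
Your proof is correct, and for the decisive step it takes a genuinely different route from the paper. The overall skeleton is the same: write $f_0''(z)=\mathrm{Id}+K(z)$, reduce to $z=Q$ by translation invariance, and show that weakly null sequences are sent to strongly null ones; the treatment of the local term $\int (|x|^{-1}*Q^2)\xi_n\eta$ as multiplication by a bounded potential vanishing at infinity is also the paper's argument. The difference is in the non-local term $\int (|x|^{-1}*(Q\xi_n))Q\eta$. The paper never invokes Hardy--Littlewood--Sobolev: it truncates the kernel $|x|^{-1}$ into three pieces (a small tail $|x|^{-1}-\K_\delta$, a piece $\K_\delta^R$ small in $L^{3/2}$, and a compactly supported bounded piece $\Theta_\delta^R$), controls the first two by Young's inequality uniformly in $n$, and only uses local compactness on the last piece, finishing with a triple limit $n\to\infty$, $R\to\infty$, $\delta\to 0$. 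You instead exploit the exponential decay of $Q$ to upgrade $\xi_n\rightharpoonup 0$ to $Q\xi_n\to 0$ strongly in $L^{6/5}(\mathbb{R}^3)$ (Rellich on a large ball plus a smallness estimate on the complement, using $\|Q\xi_n\|_{L^{6/5}}\le \|Q\|_{L^3}\|\xi_n\|_{L^2}$), and then let the Riesz potential do the work via HLS, $\||x|^{-1}*(Q\xi_n)\|_{L^6}\le C\|Q\xi_n\|_{L^{6/5}}$, closing with H\"older against $Q\eta$, which is bounded in $L^{6/5}$ uniformly on the unit ball; the exponents check out ($1/6+5/6=1$ and $5/6=1/3-1+2$ in the HLS scaling relation on $\mathbb{R}^3$). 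Your argument is shorter and conceptually cleaner, at the price of using the sharp smoothing of the Riesz potential; the paper's kernel-splitting argument is more elementary (only Young's inequality and local compactness) and, being less dependent on the precise integrability of the weight, is the kind of decomposition that transfers more readily to kernels for which HLS is not directly applicable.
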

\begin{proof}
Since
\[
f_0(u) = \frac{1}{2} \|u\|^2_{H} - \frac{1}{4} \int_{\mathbb{R}^3} \left( |x|^{-1}*|u|^2 \right)|u|^2\, dx,
\]
it suffices to check that $f_0''(z)$ is a compact perturbation of the identity. Therefore, we need to check that
\[
K(v_n,w_n) = \int_{\mathbb{R}^3} \left( |x|^{-1}*|Q|^2 \right) v_n w_n + 2 Q \left( |x|^{-1}*(Q v_n) \right)w_n \to 0
\]
whenever $\{v_n\}$ and $\{w_n\}$ are bounded sequences in $H^{\frac{1}{2}}(\mathbb{R}^3)$. Of course, we can assume without loss of generality that $v_n \rightharpoonup 0$ and $w_n \rightharpoonup 0$.

The first term in $K(v_n,w_n)$ goes to zero because it can be seen as a multiplication operator with $|x|^{-1}*|Q|^2$, and
\[
\lim_{|x| \to +\infty} |x|^{-1}*|Q|^2 =0.
\]
This was proved in \cite[Lemma 2.13]{CSS} in a slightly different context.  We recall the short argument in \cite{CZNrend}: given $\epsilon>0$, fix $\rho>0$ such that 
\[
\sup \left\{ \frac{1}{|y|} \mid |y| > \rho \right\} < \frac{\epsilon}{2}.
\]
Then
\begin{multline} \label{eq:sup}
\int_{\mathbb{R}^3} \frac{Q(y)^2}{|y-\zeta|} \, dy = \int_{B(\zeta,\rho)} \frac{Q(y)^2}{|y-\zeta|} \, dy 
+ \int_{\mathbb{R}^3 \setminus B(\zeta,\rho)} \frac{Q(y)^2}{|y-\zeta|} \, dy \\
\leq \||x|^{-1}\|_{L^r} \left( \int_{B(\zeta,\rho)} Q(y)^{2r'}\, dy \right)^{1/r'} + \frac{\epsilon}{2}\|Q\|_{L^2}^2
\end{multline}
for $r>3/2$, and we conclude by letting $|\zeta| \to +\infty$.

As for the second term, fix $\delta>0$ and $R>0$. Define
\[
\K_\delta (x) = \begin{cases}
1/|x| &\text{if $|x| \leq 1/\delta$}\\
0 &\text{otherwise}
\end{cases}
\]
and
\[
\K_\delta^R(x) = \max \left\{ |\K_\delta(x) - R ,0\right\} \chi_{B(0,R)}(x) + \K_\delta (x) \chi_{\mathbb{R}^3 \setminus B(0,R)}(x).
\]
It is clear that, given $\delta >0$, $\lim_{R \to +\infty} \|\K_\delta^R\|_{L^r} =0$ for any $r \in [1,3)$. Finally, set $\Theta_\delta^R = \K_\delta - \K_\delta^R$, and remark that $\operatorname{supp}\Theta_\delta^R \subset B(0,R)$.

We apply Young's inequality and get
\begin{multline*}
\int_{\mathbb{R}^3} \left( |x|^{-1}*|Q v_n| \right) |Q w_n| \\
\leq \int_{\mathbb{R}^3} \left( \left( |x|^{-1}-\K_\delta \right)*|Qv_n| \right)|Qw_n| + \int_{\mathbb{R}^3} \left( \K_\delta^R * |Qv_n| \right) |Qw_n| \\
{}+ \int_{\mathbb{R}^3} \left( \Theta_\delta^R * |Qv_n| \right) |Qw_n| \\
\leq \delta \|Q v_n\|_{L^1} \|Q w_n\|_{L^1} + \|\K_\delta^R\|_{L^{3/2}} \| Q v_n \|_{L^{3/2}} \|Qw_n\|_{L^{3/2}} + \int_{\mathbb{R}^3} \left( \Theta_\delta^R * |Qv_n| \right) |Qw_n| \\
\leq C \left( \delta + \|\K_\delta^R\|_{L^{3/2}} \right) + \int_{\mathbb{R}^3} \left( \Theta_\delta^R * |Qv_n| \right) |Qw_n|.
\end{multline*}
We need to prove that
\begin{equation} \label{eq:6}
\lim_{n \to +\infty} \int_{\mathbb{R}^3} \left( \Theta_\delta^R * |Qv_n| \right) |Qw_n| =0.
\end{equation}
To this aim, pick any $\epsilon>0$ and choose a radius $R_1>0$ such that 
\[
\int_{\mathbb{R}^3 \setminus B(0,R_1)} |Q|^2 < \epsilon.
\]
Then write $R_2 = R_1 + R$, so that $\Theta_\delta^R (z-y)=0$ whenever $|y| < R_1$ and $|z| \geq R_2$. Now,
\begin{multline*}
\int_{\mathbb{R}^3} \left( \Theta_\delta^R * |Qv_n| \right) |Qw_n| \\
= \int_{B(0,R_1)} \left( \Theta_\delta^R * \left( \chi_{B(0,R_2)} |Qv_n| \right) \right) |Qw_n| + \int_{\mathbb{R}^3 \setminus B(0,R_1)} \left( \Theta_\delta^R *  |Qv_n| \right)  |Qw_n| \\
\leq R \left( \int_{B(0,R_2)} |Qv_n| \right) \left( \int_{B(0,R_1)} |Qw_n| \right) \\
{}+ \|\Theta_\delta^R * |Qv_n| \|_\infty \left( \int_{\mathbb{R}^3 \setminus B(0,R_1)} |w_n|^2 \right)^{\frac{1}{2}} \left( \int_{\mathbb{R}^3 \setminus B(0,R_1)} |Q|^2 \right)^{\frac{1}{2}} \\
\leq R \|Q\|_{L^2} \|w_n\|_{L^2} \left( \left( \int_{B(0,R_2)} |v_n|^2 \right)^{\frac{1}{2}} \|Q\|_{L^2}+ R \|v_n\|_{L^2} \left( \int_{\mathbb{R}^3 \setminus B(0,R_1)} |Q|^2 \right)^{\frac{1}{2}} \right) \\
\leq C R \left(  \left( \int_{B(0,R_2)} |v_n|^2 \right)^{\frac{1}{2}} + \left( \int_{\mathbb{R}^3 \setminus B(0,R_1)} |Q|^2 \right)^{\frac{1}{2}} \right),
\end{multline*}
and the right-hand side goes to zero because $v_n \to 0$ strongly in $L_{\mathrm{loc}}^2$ thanks to Lemma \ref{lem:3.1}. Our claim (\ref{eq:6}) follows by letting $n \to +\infty$, $R \to +\infty$ and finally $\delta \to 0$. 
\end{proof}
As a consequence, our problem fits into the abstract framework. Here is a possible existence result.
\begin{theorem} \label{th:2.3}
Pick $N>0$ so small that the variational problem $E(N)$ has a unique non-degenerate ground state $Q \in H^{\frac12}(\mathbb{R}^3)$; let $\mu \in \mathbb{R}^3$ be the corresponding Lagrange multiplier. Assume moreover that $g \in L^\infty (\mathbb{R}^3)$ vanishes at infinity and does not change sign. Then, for every $\ge$ sufficiently small, equation (\ref{eq:prh}) has (at least) a solution $u_\ge \in H^{\frac12}(\mathbb{R}^3)$ such that $u_\ge \simeq Q(\cdot - \xi)$, for a suitable choice of $\xi \in \mathbb{R}^3$.
\end{theorem}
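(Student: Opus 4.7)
The plan is to invoke Proposition~\ref{prop:1.2}, so I first need to check that the abstract hypotheses $(h_1)$--$(h_3)$ are fulfilled by $(f_0, Z)$ with $Z = \{Q(\cdot - \xi) \mid \xi \in \mathbb{R}^3\}$. Hypothesis $(h_1)$ is clear since $\xi \mapsto Q(\cdot - \xi)$ parametrizes $Z$ as a smooth three-dimensional manifold and translation invariance of the Euler-Lagrange equation makes every element a critical point of $f_0$. Hypothesis $(h_2)$ is precisely the content of the preceding Lemma. For $(h_3)$, the non-degeneracy property recalled from \cite{EL}, namely $\ker L_{+} = \operatorname{span}\{\partial_{x_j} Q : j=1,2,3\}$, matches $T_z Z$ (translated by $\xi$), so the two spaces coincide.

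Next I would study the reduced functional $\Gamma(\xi) := G(Q(\cdot - \xi))$ on $Z \simeq \mathbb{R}^3$. Changing variables by $x \mapsto x + \xi$ yields
\[
\Gamma(\xi) = -\frac{1}{4} \int_{\mathbb{R}^3} \int_{\mathbb{R}^3} \frac{Q(x)^2\, Q(y)^2}{|x-y|}\, g(x+\xi)\, dx\, dy.
\]
The crucial property is that $\Gamma(\xi) \to 0$ as $|\xi| \to \infty$. Indeed, the integrand is dominated by $\|g\|_\infty Q(x)^2 Q(y)^2 / |x-y|$, which is integrable thanks to the weak Young inequality already used to make sense of $f_\ge$ (or equivalently by Hardy--Littlewood--Sobolev), while $g(x+\xi) \to 0$ pointwise because $g$ vanishes at infinity. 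Dominated convergence gives the claim.

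Since $g$ has constant sign and is not identically zero, and since $Q > 0$ everywhere, $\Gamma$ has constant strict sign and is not identically zero on $\mathbb{R}^3$. Combined with the decay at infinity, this forces $\Gamma$ to attain a global extremum at some $\xi_0 \in \mathbb{R}^3$: a minimum when $g \geq 0$, a maximum when $g \leq 0$. Choosing a sufficiently large open ball $A \ni \xi_0$ furnishes the strict inequality $\Gamma(\xi_0) < \inf_{\partial A} \Gamma$ (or the symmetric one) required by Proposition~\ref{prop:1.2}, which in turn produces a critical point of $f_\ge$ of the form $u_\ge = Q(\cdot - \xi_\ge) + w(Q(\cdot - \xi_\ge), \ge)$ with $w = O(\ge)$, yielding the desired solution. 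The main technical point is really the decay of $\Gamma$ at infinity; the rest is a matter of unwinding the abstract machinery and exploiting the positivity and exponential decay of $Q$ recorded after the variational problem $E(N)$.
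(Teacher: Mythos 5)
Your proof is correct and follows essentially the same route as the paper: verify the abstract hypotheses for $(f_0,Z)$, show that the reduced functional tends to zero as $|\xi|\to\infty$ while keeping a constant sign, and then apply Proposition \ref{prop:1.2} on a large ball around a global extremum. The only cosmetic difference is that you obtain the decay at infinity by dominated convergence (using the integrability of $(x,y)\mapsto Q(x)^2Q(y)^2/|x-y|$ and the pointwise vanishing of $g(\cdot+\xi)$), whereas the paper splits the integral into the regions $|x|<R$ and $|x|\ge R$ and estimates the two pieces separately; both arguments rest on the same fact, namely $\widetilde{Q}=\left(|x|^{-1}*|Q|^2\right)|Q|^2\in L^1(\mathbb{R}^3)$.
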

\begin{proof}
Set $\widetilde{Q} = \left( |x|^{-1}*|Q|^2 \right)|Q|^2 \in L^1(\mathbb{R}^3)$, and recall that $|x|^{-1}*|Q|^2 \in L^\infty (\mathbb{R}^3)$ by (\ref{eq:sup}). The abstract scheme invites us to looking for stable critical points of the (finite--dimensional) function
\[
G(\xi) = \int_{\mathbb{R}^3} g(x) \widetilde{Q}(x - \xi) \, dx, \qquad \xi \in \mathbb{R}^3.
\]
Let us decompose $G(\xi) = G_1(R,\xi)+G_\infty (R,\xi)$, where $R>0$ and
\begin{align*}
G_1 (\xi) &= \int_{|x|<R} g(x) \widetilde{Q}(x - \xi) \, dx \\
G_\infty (\xi) &= \int_{|x| \geq R} g(x) \widetilde{Q}(x - \xi) \, dx.
\end{align*}
We can estimate, for some constant $C_\infty >0$,
\begin{equation} \label{eq:G-inf}
\left| G_\infty (R,\xi) \right| \leq \sup_{|x| \geq R} |g(x)| \int_{|x-\xi| \geq R} \widetilde{Q}(x) \, dx \leq C_\infty \sup_{|x| \geq R} |g(x)|.
\end{equation}
On the other hand,
\begin{equation} \label{eq:G-1}
\left| G_1(R,\xi) \right| \leq \sup_{|x|<R} |g(x)| \int_{|x-\xi|<R} \widetilde{Q}(x)\, dx.
\end{equation}
Let $\epsilon >0$. Fix $R = R(\epsilon)>0$ such that $\sup_{|x| \geq R} |g(x)| < \epsilon$. The right-hand side of (\ref{eq:G-1}) tend to zero as $|\xi| \to +\infty$, and thus
\[
\limsup_{|x| \to +\infty} |G(\xi)| \leq C_\infty \epsilon.
\]
Since this is true for any $\epsilon>0$, we conclude that
\[
\lim_{|\xi| \to +\infty} G(\xi) = 0,
\]
and that $G$ does not change sign. Therefore $G$ must have a strict local maximum (or minimum) point at some $\xi_0$. It now suffices to apply Proposition \ref{prop:1.2}.
\end{proof}
By exploiting the exponential decay of $Q$, we can prove a similar result under different assumptions.
\begin{theorem} \label{th:2.4}
Pick $N>0$ so small that the variational problem $E(N)$ has a unique non-degenerate ground state $Q \in H^{\frac12}(\mathbb{R}^3)$; let $\mu \in \mathbb{R}^3$ be the corresponding Lagrange multiplier. Assume moreover that $g \in L^\theta (\mathbb{R}^3)$ for some $\theta>1$ and $g$ does not change sign. Then, for every $\ge$ sufficiently small, equation (\ref{eq:prh}) has (at least) a solution $u_\ge \in H^{\frac12}(\mathbb{R}^3)$ such that $u_\ge \simeq Q(\cdot - \xi)$, for a suitable choice of $\xi \in \mathbb{R}^3$.
\end{theorem}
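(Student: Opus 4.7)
The proof follows the same abstract strategy as Theorem~\ref{th:2.3}: once we verify that
\[
G(\xi) = \int_{\mathbb{R}^3} g(x)\,\widetilde{Q}(x-\xi)\,dx
\]
possesses a stable critical point in $\mathbb{R}^3$, Proposition~\ref{prop:1.2} delivers the desired solution on the perturbed manifold $Z_\ge$. Hence the whole argument reduces to proving that $G$ attains a strict extremum. As in the previous theorem, since $g$ does not change sign and $\widetilde{Q}>0$ pointwise, $G$ itself has constant sign, so we just need $G(\xi)\to 0$ as $|\xi|\to+\infty$ (and $G\not\equiv 0$, which is obvious unless $g=0$).

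The first step is to exploit the exponential decay of $Q$ at infinity to locate $\widetilde{Q}$ in a good Lebesgue space. Indeed, $|Q|^2$ decays exponentially, and by the estimate~(\ref{eq:sup}) the function $|x|^{-1}*|Q|^2$ is bounded on $\mathbb{R}^3$. Therefore
\[
\widetilde{Q}(x) = \bigl(|x|^{-1}*|Q|^2\bigr)(x)\,|Q(x)|^2
\]
decays exponentially, and in particular $\widetilde{Q}\in L^p(\mathbb{R}^3)$ for every $p\in[1,+\infty]$. In particular, letting $\theta'\in(1,+\infty)$ denote the Hölder conjugate of $\theta$, one has $\widetilde{Q}\in L^{\theta'}(\mathbb{R}^3)$.

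The second step is to recognize
\[
G(\xi) = \bigl(g * \check{\widetilde{Q}}\bigr)(\xi),\qquad \check{\widetilde{Q}}(x)=\widetilde{Q}(-x),
\]
as a convolution of an $L^\theta$ function with an $L^{\theta'}$ function, with $1<\theta,\theta'<+\infty$. By the standard density argument --- approximating $g$ by $C_c^\infty$ functions in $L^\theta$ and $\widetilde{Q}$ by $C_c^\infty$ functions in $L^{\theta'}$, then observing that the convolution of two compactly supported continuous functions is continuous and compactly supported, and finally invoking Young's inequality $\|g*\check{\widetilde{Q}}\|_\infty\leq \|g\|_{L^\theta}\|\widetilde{Q}\|_{L^{\theta'}}$ to pass to the uniform limit --- we conclude that $G\in C_0(\mathbb{R}^3)$, that is, $G$ is continuous and $\lim_{|\xi|\to+\infty}G(\xi)=0$.

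Combining these facts, $G$ is a continuous function of fixed sign that vanishes at infinity and (assuming $g\not\equiv 0$) is not identically zero. Hence $G$ attains either a strict global maximum or a strict global minimum at some point $\xi_0\in\mathbb{R}^3$, which in particular yields an open bounded set $A\subset\mathbb{R}^3$ with $\xi_0\in A$ and $G(\xi_0)>\sup_{\partial A}G$ (or the analogous inequality with the infimum). Proposition~\ref{prop:1.2} then applies and produces, for every $\ge$ sufficiently small, a critical point of $f_\ge$ of the form $u_\ge = Q(\cdot-\xi_\ge)+w(Q(\cdot-\xi_\ge),\ge)$ with $\xi_\ge$ close to $\xi_0$. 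The main conceptual point is the passage from the mere integrability of $g$ to the decay of $G$; the exponential decay of $Q$, inherited by $\widetilde{Q}$, is exactly what makes this possible and replaces the pointwise vanishing of $g$ used in Theorem~\ref{th:2.3}.
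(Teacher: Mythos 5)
Your proposal is correct and rests on exactly the same two pillars as the paper's proof: the exponential decay of $Q$ (together with the boundedness of $|x|^{-1}*|Q|^2$ from (\ref{eq:sup})) puts $\widetilde{Q}$ in $L^{\theta'}(\mathbb{R}^3)$, and then H\"older/Young with the conjugate exponents $\theta,\theta'$ forces $G(\xi)\to 0$ as $|\xi|\to+\infty$, after which the constant sign of $G$ and Proposition \ref{prop:1.2} finish the job. The only real divergence is in how the decay of $G$ is established. The paper splits the integration domain spatially, writing $G=G_1(R,\xi)+G_\infty(R,\xi)$ with the tail piece controlled by $\bigl(\int_{|x|\ge R}|g|^\theta\bigr)^{1/\theta}$ and the bounded piece by $\bigl(\int_{|x-\xi|<R}|\widetilde{Q}|^{\theta'}\bigr)^{1/\theta'}$, then sends $|\xi|\to+\infty$ and $R\to+\infty$. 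You instead recognize $G=g*\check{\widetilde{Q}}$ and invoke the standard fact that the convolution of an $L^\theta$ function with an $L^{\theta'}$ function lies in $C_0(\mathbb{R}^3)$, proved by density of $C_c^\infty$ in both spaces plus Young's inequality. The two arguments are mathematically equivalent (the spatial cutoff is essentially the approximation step made explicit), but yours is a cleaner appeal to a named lemma and additionally yields the continuity of $G$ for free, while the paper's version is more self-contained. One small caveat you share with the paper: both arguments implicitly require $\theta<\infty$ (for the tail of $\|g\|_{L^\theta}$ to vanish, respectively for $C_c^\infty$ to be dense in $L^\theta$), which is the intended reading of the hypothesis $g\in L^\theta$ with $\theta>1$.
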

\begin{proof}
Once more, $G$ does not change sign, and $G(\xi) = G_1(R,\xi)+G_\infty (R,\xi)$. Now,
\begin{align*}
\left| G_\infty (R,\xi)\right| &\leq \left( \int_{|x| \geq R} |g(x)|^\theta \, dx \right)^{\frac{1}{\theta}} \left( \int_{|x-\xi|\geq R} \left| \widetilde{Q}(x) \right|^{{\theta^\prime}} \, dx \right)^{\frac{1}{\theta^\prime}}\\
&\leq \|\widetilde{Q}\|_{L^{\theta^\prime}} \left( \int_{|x| \geq R} |g(x)|^\theta \, dx \right)^{\frac{1}{\theta}}.
\end{align*}
Similarly,
\begin{align*}
\left| G_1(R,\xi) \right| &\leq \left( \int_{|x|<R} |g(x)|^\theta \, dx \right)^{\frac{1}{\theta}} \left( \int_{|x-\xi| < R} \left| \widetilde{Q}(x) \right|^{\theta^\prime} \right)^{\frac{1}{\theta^\prime}} \\
&\leq \|g\|_{L^\theta} \left( \int_{|x-\xi| < R} \left| \widetilde{Q}(x) \right|^{\theta^\prime} \right)^{\frac{1}{\theta^\prime}}.
\end{align*}
In these estimates, $1/\theta + 1 / \theta^\prime =1$, and $\widetilde{Q} \in L^{\theta^\prime}(\mathbb{R}^3)$ thanks to the exponential decay. Letting $R \to +\infty$ and then $|\xi| \to +\infty$, we conclude as before that $\lim_{|\xi| \to +\infty} G(\xi)=0$, and consequently $G$ attains either a strict local maximum or a strict local minimum (or both).
\end{proof}
\begin{remark}
The assumption that $g$ has constant sign ensures that $G$ is non-constant. We can also allow sign-changing perturbations $g$, for example under the assumption that $\int_{\mathbb{R}^3} g(x) \widetilde{Q}(x) \, dx = G(0) \neq 0$.
\end{remark}
\begin{remark}
The reader will realize that we can deal with perturbed equations other than (\ref{eq:prh}). For example we could prove a similar existence result for
\[
\sqrt{-\Delta + m^2} \, u + \mu u = \left( |x|^{-1}*|u|^2 \right) u + \ge g(x) |u|^{p-1}u,
\]
provided that the integral $\int_{\mathbb{R}^3} g(x) |u(x)|^{p+1} \, dx$
is finite for every $u \in H$.
\end{remark}

\section{A model with the fractional laplacian}
    
    Equations governed by fractional powers of the Laplace operator $\Delta$
arise in several physical models, and we refer to \cite{FL} for some discussion. In this section we show that some existence results can be easily proved also for some of these problems.
    
   Let us consider the problem
\begin{equation}\label{eq:1}
   (-\Delta)^s u + u = \left(1+\ge h(x) \right) |u|^{p}u \quad \text{in $\mathbb{R}$},
   \end{equation}
where $0<s<1$ and
\[
0<p<p^\dag = \begin{cases} \frac{4s}{1-2s} &\text{in $0<s<1/2$} \\ +\infty &\text{if $1/2 \leq s <1$}. \end{cases}
\]
The function $h$ is a \emph{bounded} potential and $\ge>0$ is a ``small''
parameter. Our aim is to find 
solution of (\ref{eq:1}) as $\ge \to  0$.

Equations of this form have been widely investigated in the last decades when
$s=1$, i.e. when the differential 
operator coincides with the standard laplacian. For fractional operators, to the
best of our knowledge, the literature is still growing. With $\ge=1$ and in
dimension $N \geq 2$, a similar problem is studied in \cite{FQT} and in
\cite{DPV} under suitable assumptions on $h$. See also references therein.

\medskip

We wish to spend a few words on the operator $(-\Delta)^s$ appearing on the
left-hand side of (\ref{eq:1}). 
This operator is called \emph{fractional laplacian} (of order $s$) and there are
several \emph{almost equivalent} definitions. A first approach is to regard this
operator via Fourier analysis: for every test function $\varphi$,
\[
(-\Delta)^s\varphi(\xi)= \mathcal{F}^{-1} \left( |\xi|^{2s} \mathcal{F}(\varphi)(\xi) \right),
\]
where $\mathcal{F}$ stands for the Fourier transform. Hence $(-\Delta^s)$ is
pseudo-differential 
operator with symbol $|\xi|^{2s}$.

Equivalently, we may define\footnote{P.V. denotes here the \emph{principal value} of the integral.}
\[
(-\Delta^s) \varphi(x) = C_{s} \operatorname{P.V.} \int_{\mathbb{R}} \frac{\varphi(x)-\varphi(y)}{|x-y|^{1+2s}}dy,
\]
where
\[
C_{s}=\left( \int_\mathbb{R} \frac{1-\cos x}{|x|^{1+2s}} dx
\right)^{-1}.
\]
Either definition makes it clear that $(-\Delta^s)$ is a \emph{non-local}
operator: unlike the standard 
laplacian ($s=1$), the value of $(-\Delta^s)\varphi$ depends on the values of
$\varphi$ in the whole $\mathbb{R}$. In particular, compactly supported
functions won't have, in general, compactly supported fractional laplacians.
This is a serious obstruction to the use of standard techniques of nonlinear
differential equations such as localization and cut-offs.

Recently, Caffarelli and Silvestre proved in \cite{CS} a very interesting
\emph{local realization} 
of the fractional laplacian via a Dirichlet-to-Neumann operator. Roughly
speaking, we can add one more variable and solve the linear problem in
$\mathbb{R}^2_{+} = \left\{ (x,y) \mid x \in \mathbb{R}, \ y>0 \right\}$
\[
\begin{cases}
-\operatorname{div} \left(y^{1-2s} \nabla u \right) = 0 &\text{in $\mathbb{R}^2_{+}$}\\
u(x,0)=\varphi(x) &\text{for $x \in \mathbb{R}$}.
\end{cases}
\]
Then
\[
(-\Delta^s \varphi)(x)=-b_s \lim_{y \to 0+} y^{1-2s} \frac{\partial u}{\partial y}
\]
for a suitable constant $b_s$. This approach is very useful for solving
differential equations, 
as we can work again in a local setting. Once more, it is clear from either definition that problems like (\ref{eq:1}) are non-compact, due to the action of the group of translations.
 
 In the sequel, the r\^{o}le of $(-\Delta)^s$ will be somehow hidden in the
\emph{unperturbed problem} (see below for the definition), and we will use
the definition via Fourier analysis only for definiteness.

A function space that can be used quite naturally to study equation (\ref{eq:1})
is the fractional
Sobolev space\footnote{We have set $\hat{u}=\mathcal{F}(u)$, as usual.}
\[
H^s(\mathbb{R}) = \left\{ u \in L^2(\mathbb{R}) \mid \int_{\mathbb{R}} |\xi|^{2s}|\hat{u}|^2 \, d\xi < \infty \right\}
\]
endowed with the norm
\[
\|u\|_{H^s}^2 = \|u\|_{L^2}^2 + \int_{\mathbb{R}} |\xi|^{2s} |\hat{u}|^2 \, d\xi.
\]
We recall an embedding property for this space. For more information about fractional Sobolev spaces, we recommend the recent survey \cite{DNPV}.

\begin{lemma} \label{lem:3.1}
Let $2 \leq q \leq 2_s^\star = 2/(1-2s)$. Then $H^s(\mathbb{R})$ is continuously
embedded into 
$L^q(\mathbb{R})$. Moreover, this embedding is locally compact provided that $2
\leq q < 2_s^\star$.
\end{lemma}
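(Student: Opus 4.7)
My strategy is to reduce the statement to the two endpoints $q=2$ and $q=2_s^\star$ and then interpolate. The endpoint $q=2$ is tautological, since $\|u\|_{L^2} \leq \|u\|_{H^s}$ by the very definition of the norm. The genuine content lies in the critical Sobolev embedding at $q=2_s^\star = 2/(1-2s)$, together with the Rellich-type compactness for $q<2_s^\star$.

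For the critical embedding, the cleanest route is via Fourier analysis: writing $u = I_s\bigl((-\Delta)^{s/2} u\bigr)$, where $I_s$ denotes the Riesz potential with Fourier symbol $|\xi|^{-s}$, the Hardy-Littlewood-Sobolev inequality yields
\[
\|u\|_{L^{2_s^\star}} \leq C \|(-\Delta)^{s/2} u\|_{L^2} \leq C \|u\|_{H^s}.
\]
Alternatively one may pass through the Caffarelli-Silvestre extension of $u$ to $\mathbb{R}^2_{+}$ and invoke a trace-type Sobolev inequality in the weighted space with weight $y^{1-2s}$. The intermediate exponents $2 < q < 2_s^\star$ are then covered by H\"older's inequality $\|u\|_{L^q} \leq \|u\|_{L^2}^{1-\theta}\|u\|_{L^{2_s^\star}}^{\theta}$, with $\theta \in (0,1)$ determined by $1/q = (1-\theta)/2 + \theta/2_s^\star$.

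For the local compactness, fix a bounded interval $\Omega \subset \mathbb{R}$ and a sequence $\{u_n\} \subset H^s(\mathbb{R})$ with $\|u_n\|_{H^s} \leq M$. I would first extract a subsequence converging strongly in $L^2(\Omega)$ via the Fr\'echet-Kolmogorov criterion. Tightness on the bounded set $\Omega$ is automatic, and the equicontinuity in translation follows from the Fourier identity
\[
\|u(\cdot + h) - u(\cdot)\|_{L^2(\mathbb{R})}^2 = \int_{\mathbb{R}} |e^{ih\xi} - 1|^2 |\hat u(\xi)|^2 \, d\xi \leq C |h|^{2s} \|u\|_{H^s}^2,
\]
obtained by splitting the integral at $|\xi| \sim 1/|h|$ and using $|e^{ih\xi}-1|^2 \leq \min(4,|h\xi|^2)$. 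To upgrade from $q=2$ to an arbitrary $q \in [2,2_s^\star)$, I interpolate once more:
\[
\|u_n - u_m\|_{L^q(\Omega)} \leq \|u_n - u_m\|_{L^2(\Omega)}^{1-\theta}\|u_n - u_m\|_{L^{2_s^\star}(\Omega)}^{\theta},
\]
and the second factor is bounded by $2CM$ thanks to the continuous embedding just proved.

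The main technical subtlety is the non-local nature of $(-\Delta)^s$, which prevents naive cut-off arguments: multiplying $u_n$ by a smooth cut-off $\chi$ generates commutator terms $[(-\Delta)^{s/2},\chi]u_n$ that are not compactly supported and must be controlled separately. Working directly on the Fourier side as above sidesteps this issue entirely, so the whole proof reduces to HLS, a translation estimate on the Fourier side, and a standard interpolation.
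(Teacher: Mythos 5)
The paper does not actually prove this lemma: it is recalled as a known fact with a pointer to the survey \cite{DNPV}, so there is no in-house argument to compare yours against. Your proof is the standard Fourier-analytic one and it is correct: $q=2$ is immediate from the definition of the norm; the critical case $q=2_s^\star$ follows from $u=I_s\bigl((-\Delta)^{s/2}u\bigr)$ together with the Hardy--Littlewood--Sobolev inequality $\|I_s f\|_{L^{2/(1-2s)}}\leq C\|f\|_{L^2}$; intermediate exponents are covered by H\"older interpolation; and for local compactness the Fr\'echet--Kolmogorov criterion applies on a bounded interval (tightness is vacuous there), your translation estimate being a consequence of $|e^{ih\xi}-1|^2\leq\min\bigl(4,|h\xi|^2\bigr)\leq 4\,|h\xi|^{2s}$, which gives $\|u(\cdot+h)-u\|_{L^2}^2\leq 4|h|^{2s}\int_{\mathbb{R}}|\xi|^{2s}|\hat u|^2\,d\xi$. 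The final interpolation upgrade to $2\leq q<2_s^\star$ is sound because the exponent $\theta$ there is strictly less than $1$, so the vanishing $L^2(\Omega)$ factor forces convergence in $L^q(\Omega)$. The only caveat worth recording is that the statement, and your HLS step, implicitly require $0<s<1/2$: for $s\geq 1/2$ the quantity $2/(1-2s)$ is not a meaningful exponent (and at $s=1/2$ the endpoint embedding into $L^\infty(\mathbb{R})$ fails), so in that regime one should simply embed $H^s(\mathbb{R})\subset H^{s'}(\mathbb{R})$ for any $s'<1/2$ and obtain every finite $q$, which is all the paper uses.
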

\begin{remark}
Even in dimension one, functions in $H^s(\mathbb{R})$ need not be continuous. Actually, Sobolev's critical exponent for $H^s$ is $2/(1-2s)$: continuity is granted only when $1/2<s<1$.
\end{remark}

We assume that $h \in C(\mathbb{R})$ is bounded. Then we can easily prove that
(weak) solutions
to (\ref{eq:1}) correspond to critical points of the functional
\[
f_\ge (u) = \frac{1}{2} \|u\|_{H^s}^2 - \frac{1}{p+2} \int_{\mathbb{R}} |u|^{p+2} - \frac{\ge}{p+2} \int_{\mathbb{R}} h(x) |u(x)|^{p+2}\, dx.
\]
Setting
\[
f_0(u)=\frac{1}{2} \|u\|_{H^s}^2 - \frac{1}{p+2} \int_{\mathbb{R}} |u|^{p+2},
\]
we can write
\begin{equation}\label{eq:2}
f_{\ge}(u)=f_0(u)+\ge G(u),
\end{equation}
where
\[
G(u)=-\frac{1}{p+2}\int_{\mathbb{R}} h(x) |u(x)|^{p+2}\, dx.
\]
When $\ge=0$, critical points of the functional $f_0 \colon H^s(\mathbb{R}) \to
\mathbb{R}$ 
correspond to (weak) solutions to the equation
\begin{equation}\label{eq:3}
(-\Delta^s)u+u=|u|^{p}u \quad\text{in $\mathbb{R}$,}
\end{equation}
which we call \emph{the unperturbed problem}. This equation was deeply studied in
\cite{FL}. 
We recall here the main results.
\begin{theorem}[Franck and Lenzmann] \label{th:2.1}
Let $0<s<1$ and $0<p<p^\dag$. Then the following holds.
\begin{itemize}
\item[(i)] \textbf{Existence:} There exists a solution $Q \in H^s(\mathbb{R})$
of equation (\ref{eq:3}) such that $Q=Q(|x|)>0$ is even, positive and strictly
decreasing in $|x|$. Moreover, the function $Q \in H^s(\mathbb{R})$ is a
minimizer for 
\[
J^{s,p}(u)= \frac{\left(\int (-\Delta)^{\frac{s}{2}}u|^2\right)^{\frac{p}{4s}} \left( \int |u|^2 \right)^{\frac{p}{4s} (2s-1)+1}}{\int |u|^{p+2}}.
\]
\item[(ii)] \textbf{Symmetry and Monotonicity:} If $Q \in H^s(\mathbb{R})$ with $Q \geq 0$ and $Q \not\equiv 0$ solves (\ref{eq:3}), then there exists $x_0 \in \mathbb{R}$ such that $Q(\cdot - x_0)$ is even, positive and strictly decreasing in $|x-x_0|$.
\item[(iii)] \textbf{Regularity and Decay:} If $Q \in H^s(\mathbb{R})$ solves (\ref{eq:3}), then $Q \in H^{2s+1}(\mathbb{R})$. Moreover, we have the decay estimate
\[
|Q(x)|+|xQ'(x)| \leq \frac{C}{1+|x|^{2s+1}}
\]
for all $x \in \mathbb{R}$ and some constant $C>0$.
\end{itemize}
\end{theorem}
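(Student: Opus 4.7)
The plan is to prove the three statements in order, leveraging tools from the calculus of variations and harmonic analysis adapted to the non-local setting.

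For existence in (i), I would minimise the Weinstein-type quotient $J^{s,p}$. A direct computation shows that $J^{s,p}$ is invariant under the two-parameter family of rescalings $u \mapsto \mu u(\lambda\,\cdot)$ for $\mu,\lambda>0$, so this freedom lets us normalise a minimising sequence $\{u_n\}$ with $\|(-\Delta)^{s/2} u_n\|_{L^2} = \|u_n\|_{L^2} = 1$. The fractional Polya--Szego inequality (which can be established via the Caffarelli--Silvestre extension or the Gagliardo double-integral characterisation of $H^s$) lets us further replace $u_n$ by its symmetric decreasing rearrangement without raising the numerator or decreasing the denominator. Symmetric decreasing $u \in H^s(\mathbb{R})$ obey the pointwise tail bound $|u(x)|^2 \lesssim \|u\|_{L^2}^2 / |x|$, and combined with the local compactness of $H^s(\mathbb{R}) \hookrightarrow L^{p+2}_{\mathrm{loc}}(\mathbb{R})$ from Lemma \ref{lem:3.1}, this yields strong $L^{p+2}$ convergence along a subsequence to a non-trivial minimiser $Q$. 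The Euler--Lagrange equation of the constrained problem, followed by the appropriate choice of scaling parameters, gives exactly (\ref{eq:3}).

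For the symmetry statement (ii), once regularity from (iii) is available, any non-negative non-trivial solution can be reformulated by inverting the elliptic operator as
\[
Q = \mathcal{G}_s * Q^{p+1}, \qquad \mathcal{G}_s = \bigl((-\Delta)^s + 1\bigr)^{-1},
\]
with $\mathcal{G}_s$ a strictly positive, radial, integrable kernel on $\mathbb{R}$. A moving-planes argument tailored to non-local integral equations then identifies a point $x_0$ of symmetry and delivers strict monotonicity in $|x-x_0|$; strict positivity of $\mathcal{G}_s$ is what rules out the critical plane position. Finally, for (iii), the same convolution identity is exploited: $\mathcal{G}_s$ is smoothing of order $2s$ and satisfies the polynomial tail estimate $\mathcal{G}_s(x) \lesssim (1+|x|)^{-(1+2s)}$ (as can be read off the symbol $(|\xi|^{2s}+1)^{-1}$). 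A standard $L^q$-bootstrap upgrades $Q$ to $L^\infty$, and a further iteration in Sobolev scales yields $Q \in H^{2s+1}$. The pointwise decay of $Q$ then follows by splitting $\mathcal{G}_s * Q^{p+1}$ into inside- and outside-of-ball pieces and exploiting the prescribed tail of $\mathcal{G}_s$ together with the $L^1$ smallness of $Q^{p+1}$ at infinity; an analogous treatment after differentiating the integral representation handles $xQ'(x)$.

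The principal obstacle, I expect, lies in establishing (or properly invoking) the fractional Polya--Szego inequality on which the first two parts both hinge: because $(-\Delta)^{s/2}$ is non-local, the classical co-area proof breaks down and one must resort to the Caffarelli--Silvestre extension (where the weighted Dirichlet energy does decrease under rearrangement) or to the Gagliardo double integral. The decay statement in (iii) is the other delicate point, since $\mathcal{G}_s$ decays only polynomially and constants must be tracked carefully through the bootstrap to achieve the sharp rate $(1+|x|)^{-(1+2s)}$ — a feature that will have non-trivial consequences for the stability/uniqueness analysis of ground states used implicitly in Theorem \ref{th:2.3} and Theorem \ref{th:2.4}.
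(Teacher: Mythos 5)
This theorem is stated in the paper without proof: it is quoted verbatim from Frank and Lenzmann \cite{FL}, so there is no internal argument to compare against. Your sketch correctly reconstructs the strategy of that reference — existence by minimising the scale-invariant Weinstein quotient $J^{s,p}$ after symmetric-decreasing rearrangement (fractional Polya--Szego) plus the Strauss-type decay of radial decreasing functions, symmetry by moving planes applied to the integral equation $Q=\mathcal{G}_s*Q^{p+1}$, and regularity/decay from the $2s$-smoothing and $|x|^{-(1+2s)}$ tail of the kernel $\mathcal{G}_s$ — and it correctly identifies the genuinely delicate points; the only inaccuracy worth noting is that the bound $\mathcal{G}_s(x)\lesssim (1+|x|)^{-(1+2s)}$ fails near the origin when $0<s<\tfrac12$, where $\mathcal{G}_s$ has an integrable singularity of order $|x|^{2s-1}$ that must be handled separately in the bootstrap.
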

\begin{remark}
 Unlike the familiar case $s=1$, ground state solutions $Q$ do \emph{not} decay
exponentially fast at infinity.
\end{remark}

\begin{theorem}[Franck and Lenzmann] \label{th:2.2}
Let $0<s<1$ and $0<p<p^\dag$. Suppose that $Q \in H^s(\mathbb{R})$ is a positive
solution of (\ref{eq:3}) 
and consider the linearized operator
\[
L_{+}=(-\Delta)^s +I -(p+1)Q^p
\]
acting on $L^2(\mathbb{R})$. Then the following condition holds: If $Q \in
H^s(\mathbb{R})$ is a 
local minimizer for $J^{s,p}$, then $L{+}$ is non degenerate, i.e. its kernel
satisfies
\[
\ker L{+} = \operatorname{span}\{Q'\}.
\]
In particular, any ground state solution $Q=Q(|x|)$ of equation (\ref{eq:3}) has
a non degenerate 
linearized operator $L_{+}$.
\end{theorem}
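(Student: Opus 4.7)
The plan is to establish the two inclusions of the kernel identity separately. The containment $\operatorname{span}\{Q'\} \subset \ker L_+$ is a routine consequence of translation invariance of (\ref{eq:3}): differentiating the identity $(-\Delta)^s Q(\cdot - t) + Q(\cdot - t) = Q(\cdot - t)^{p+1}$ with respect to $t$ at $t=0$ yields $L_+ Q' = 0$, and $Q'\not\equiv 0$ because Theorem \ref{th:2.1} forces $Q$ to be strictly monotone in $|x|$.

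For the reverse inclusion I would decompose $L^2(\mathbb{R}) = L^2_{\mathrm{even}} \oplus L^2_{\mathrm{odd}}$. Since $Q$ is even, the potential $(p+1)Q^p$ commutes with the parity operator, and so does $(-\Delta)^s$; hence $L_+$ preserves parity. Because $Q'$ is odd, this reduces the problem to two separate statements: $\ker L_+ \cap L^2_{\mathrm{odd}} = \operatorname{span}\{Q'\}$, and $\ker L_+ \cap L^2_{\mathrm{even}} = \{0\}$. For the odd sector the natural tool is a Perron--Frobenius argument: strict monotonicity of $Q$ makes $Q'$ sign-definite on $(0,+\infty)$, so it is, up to a sign, the ground state of $L_+$ restricted to odd functions, and simplicity of the bottom eigenvalue then constrains the odd kernel to be one-dimensional.

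The even sector is harder, and it is here that the hypothesis that $Q$ minimizes $J^{s,p}$ is really used. I would translate the second variation of $J^{s,p}$ at $Q$ into a coercivity statement for $L_+$ on the subspace of even functions that respect the natural scaling constraints, and combine it with a Morse-index count to rule out a zero eigenvalue in that sector. The main obstacle will be the non-locality of $(-\Delta)^s$, which disables classical one-dimensional ODE tools such as Sturm oscillation and Wronskian identities; the route taken by Frank and Lenzmann is to pass to the Caffarelli--Silvestre extension, establish real-analytic dependence of both $Q$ and the relevant spectral projections of $L_+$ on the parameter $s$, and then continue from the classical limit $s=1$ (where non-degeneracy of the linearized NLS operator is due to Weinstein and Kwong) down to the desired value $s \in (0,1)$, verifying that no zero eigenvalue can migrate into the even sector along the deformation.
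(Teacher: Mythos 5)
First, a point of reference: the paper itself offers no proof of this statement. Theorem \ref{th:2.2} is imported verbatim from Frank and Lenzmann \cite{FL} and is used in the article only as a black box, to verify hypothesis ($h_3$) of the abstract perturbation scheme for the manifold $Z=\{Q(\cdot-\theta)\}$. So there is no internal argument to compare yours against; the only meaningful benchmark is the original proof in \cite{FL}.

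Measured against that, your outline identifies the correct architecture: translation invariance gives $\operatorname{span}\{Q'\}\subset\ker L_{+}$; $L_{+}$ commutes with parity, splitting the problem into odd and even sectors; a Perron--Frobenius argument handles the odd sector because $Q'$ is sign-definite on each half-line; and the even sector is where minimality for $J^{s,p}$ and continuation in $s$ down from the classical Kwong--Weinstein nondegeneracy at $s=1$ enter. But as a proof it stops exactly where the difficulty begins. Two steps are asserted rather than established. (i) The Perron--Frobenius step in the odd sector is not automatic for a nonlocal operator: one needs that the relevant semigroup restricted to odd functions is positivity improving on the half-line, which is a genuine lemma in \cite{FL}, not a standard fact one can wave at. (ii) In the even sector, the second-variation and Morse-index information you invoke only shows that $L_{+}$ has at most one negative eigenvalue and is nonnegative on a subspace of finite codimension; this bounds the index but does not by itself exclude an even zero mode. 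Ruling that out is precisely the purpose of the analytic continuation in $s$ (uniqueness and analyticity of the branch $s\mapsto Q_s$, plus the nonvanishing of a spectral quantity along the deformation), and you defer this entire step to ``the route taken by Frank and Lenzmann.'' In short: a correct roadmap of the known proof, but the substance of the theorem is delegated to the very reference it is attributed to, so as a standalone argument it has a genuine gap in the even sector.
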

\begin{theorem}[Franck and Lenzmann]
Let $0<s<1$ and $0<p<p^\dag$. Then the ground state solution $Q=Q(|x|)>0$ for
equation 
(\ref{eq:3}) is unique.
\end{theorem}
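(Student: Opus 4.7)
The plan is to combine the variational characterization from Theorem~\ref{th:2.1}(i), the non-degeneracy established in Theorem~\ref{th:2.2}, and a continuation argument in the fractional parameter $s$. By Theorem~\ref{th:2.1}(ii) any ground state is, after translation, even, positive and strictly decreasing in $|x|$, so it suffices to prove uniqueness within the class of radial, positive ground states, which I will normalise by requiring $Q$ to attain its maximum at the origin.

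The core of the argument is a deformation in $s$. Define (locally in $s$) a map $s \mapsto Q_s$ where $Q_s$ is a radial ground state of $(-\Delta)^s u + u = |u|^p u$. The linearised operator
\[
L_{+}^{s} = (-\Delta)^s + I - (p+1) Q_s^{p}
\]
has, by Theorem~\ref{th:2.2}, a one-dimensional kernel spanned by $Q_s'$, which is odd. Restricting to the subspace of even functions thus gives an invertible Fredholm operator, and the Implicit Function Theorem produces a continuous branch $s \mapsto Q_s$. Consequently the set
\[
\mathcal{U} = \left\{ s \in (0,1) \mid \text{the ground state of (\ref{eq:3}) is unique at this value of } s \right\}
\]
is open in $(0,1)$. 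At the endpoint $s \to 1^-$ the equation degenerates to the classical ODE $-u''+u = |u|^{p}u$ on $\mathbb{R}$, for which uniqueness of the positive even ground state is Kwong's theorem; a suitable convergence argument (using $C_s \to $ a finite limit and $(-\Delta)^s \to -\Delta$ on test functions) shows that a neighbourhood of $s=1$ lies in $\mathcal{U}$, so $\mathcal{U}$ is non-empty. A connectedness argument, showing that $\mathcal{U}$ is also closed, then forces $\mathcal{U} = (0,1)$.

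The hard part is the closedness of $\mathcal{U}$, which demands uniform a priori estimates on any putative family of ground states as $s$ varies in a compact subinterval of $(0,1)$. This is delicate because of the slow polynomial decay $|Q(x)| \lesssim (1+|x|)^{-(2s+1)}$ from Theorem~\ref{th:2.1}(iii), which degenerates as $s \to 0^+$, and because tightness of the mass has to be preserved in the limit to rule out the emergence of a second branch of ground states. A convenient framework is the Caffarelli--Silvestre extension, which converts the non-local equation into a degenerate elliptic problem on the half-plane with weight $y^{1-2s}$; Sturm-type comparison and moving-plane arguments on the extension, together with the uniform Pohozaev and mass identities derived from $J^{s,p}$, are the expected tools to close the argument.
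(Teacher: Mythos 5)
First, a point of context: the paper does not prove this statement at all --- it is quoted verbatim from Frank and Lenzmann \cite{FL} (``We recall here the main results''), so there is no in-paper argument to compare against. What you have written is, in outline, precisely the strategy of the original Frank--Lenzmann uniqueness proof: anchor at $s=1$ via Kwong's ODE uniqueness theorem, use the non-degeneracy of Theorem \ref{th:2.2} together with the oddness of $Q_s'$ to invert $L_{+}^{s}$ on the even subspace and run the Implicit Function Theorem, and conclude by an open-and-closed argument in $s$. Identifying this route, including the role of the even sector, is correct and non-trivial.

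That said, as a proof the proposal has a genuine gap exactly where you flag one: the closedness of $\mathcal{U}$ (equivalently, the global continuation of the branch and the exclusion of a second branch) is the technical heart of \cite{FL} and is only gestured at here with ``expected tools.'' Concretely, one needs (a) uniform $H^{s}$ and $L^\infty$ a priori bounds on ground states as $s$ ranges over a compact subinterval, (b) uniform regularity and decay so that a sequence $Q_{s_n}$ with $s_n \to s_*$ converges strongly to a ground state at $s_*$ without loss of mass at infinity, and (c) a local uniqueness statement near $s=1$ strong enough to show that \emph{every} ground state for $s$ close to $1$ coincides with the continued branch --- not merely that the branch exists. None of these is supplied, and (c) in particular is not a consequence of the Implicit Function Theorem alone, which only gives uniqueness in a neighbourhood of the known solution in function space. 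There is also a bookkeeping issue you should make explicit: the admissible range $p<p^\dagger(s)$ depends on $s$, so the continuation path must stay in the subcritical region (this works because $p^\dagger$ is increasing in $s$, but it must be said). As it stands the proposal is a correct road map to the theorem, not a proof of it; for the purposes of this paper the honest course is to cite \cite{FL}, as the author does.
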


We now introduce our manifold
\[
Z=\left\{ Q( \cdot - \theta) \mid \theta \in \mathbb{R} \right\},
\]
where $Q$ is the unique, radially symmetric, positive ground state solution of
(\ref{eq:3}). Each element of $Z$ is a critical point of $f_0$; moreover, since $Q$ decays at infinity, 
it is standard to check that $D^2 f_0(z)$ is a compact perturbation of the
identity, for every $z \in Z$, and assumption (h$_2$) is thus matched.

By the same token as in the previous section, we can prove the next existence result.

\begin{theorem}
Assume $h \in L^\infty(\mathbb{R})$ has constant sign and $\lim_{|x| \to +\infty} h(x)=0$. Suppose that $0<s<1$ and $0<p<p^\dag$. Then, for
every $\ge$ 
sufficiently small, equation (\ref{eq:1}) has a non-trivial solution $u_\ge
\simeq Q(\cdot - \bar{\theta})$, for some $\bar{\theta} \in \mathbb{R}$.
\end{theorem}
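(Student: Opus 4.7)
The plan is to place the problem inside the abstract perturbation scheme of the Introduction, with $Z=\{Q(\cdot-\theta)\mid \theta\in\mathbb{R}\}$ as the critical manifold of $f_0$, and then produce a stable critical point of the reduced one-variable function $G|_Z$ so that Proposition~\ref{prop:1.2} applies.

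First I would check the three abstract hypotheses on the pair $(f_0,Z)$. Hypothesis $(h_1)$ is immediate: $Z$ is a smooth one-dimensional submanifold of $H^s(\mathbb{R})$ parametrized by translations, each $Q(\cdot-\theta)$ is a critical point of $f_0$ by translation invariance, and $T_{z_\theta}Z=\operatorname{span}\{Q'(\cdot-\theta)\}$. Hypothesis $(h_2)$, i.e.\ compactness of $f_0''(z_\theta)-\operatorname{Id}$, follows along the standard lines already indicated just before the statement, using the decay estimate in Theorem~\ref{th:2.1}(iii) together with the local compact embedding in Lemma~\ref{lem:3.1}. Hypothesis $(h_3)$ is exactly the non-degeneracy result of Frank and Lenzmann recalled in Theorem~\ref{th:2.2}.

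The reduced problem is then to find a stable critical point of
\[
G(\theta) = -\frac{1}{p+2}\int_\mathbb{R} h(x)\, Q(x-\theta)^{p+2}\, dx, \qquad \theta\in\mathbb{R}.
\]
Setting $\widetilde Q(x)=Q(x)^{p+2}$, the decay bound of Theorem~\ref{th:2.1}(iii) gives $\widetilde Q\in L^1(\mathbb{R})\cap L^\infty(\mathbb{R})$, since $(2s+1)(p+2)>1$ throughout the admissible range. I would then mirror the argument of Theorem~\ref{th:2.3}: split $G(\theta)=G_1(R,\theta)+G_\infty(R,\theta)$ into integrals over $|x|<R$ and $|x|\geq R$; exploit $\lim_{|x|\to\infty}h(x)=0$ to make $|G_\infty|$ arbitrarily small uniformly in $\theta$ via $\|\widetilde Q\|_{L^1}$; and, for fixed $R$, perform the change of variable $y=x-\theta$ to conclude that $|G_1(R,\theta)|\to 0$ as $|\theta|\to\infty$, since the integration region slides off to infinity while $\widetilde Q\in L^1$. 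Hence $G$ is continuous and vanishes at infinity. The constant sign of $h$ combined with $Q^{p+2}>0$ forces $G$ to have constant sign on $\mathbb{R}$ with $G(0)\neq 0$; a continuous non-trivial function of constant sign vanishing at infinity must attain a strict global extremum at some $\bar\theta\in\mathbb{R}$. Proposition~\ref{prop:1.2} then yields, for every sufficiently small $\ge$, a critical point $u_\ge \simeq Q(\cdot-\bar\theta)$ of $f_\ge$, which is the required solution of \eqref{eq:1}.

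The one point requiring more care than in the exponential-decay setting of the pseudo-relativistic Hartree case is the integrability of $\widetilde Q$, because here $Q$ decays only polynomially; this is the only place where the argument could break down. However, the elementary observation that $(2s+1)(p+2)>1$ for every $s\in(0,1)$ and $p>0$ dissolves this mild obstacle, so no ingredient beyond those already invoked for $(h_2)$ is needed.
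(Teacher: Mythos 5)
Your proposal is correct and follows essentially the same route as the paper: the paper's proof simply invokes the splitting argument of Theorem~\ref{th:2.3} to get $G(\theta)\to 0$ as $\theta\to\pm\infty$, notes $\operatorname{sign}G=\operatorname{sign}h$, and applies Proposition~\ref{prop:1.2}. Your extra check that $Q^{p+2}\in L^1(\mathbb{R})$ despite the merely polynomial decay (via $(2s+1)(p+2)>1$) is a detail the paper leaves implicit, and it is a worthwhile one to make explicit.
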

\begin{proof}
We need to find stable critical points of the function
\[
G(\theta) = \int_{\mathbb{R}} h(x) |Q(x-\theta)|^{p+2} \, dx.
\]
As in the proof of Theorem \ref{th:2.3}, $G(\theta) \to 0$ as $\theta \to \pm \infty$, and $\operatorname{sign} G = \operatorname{sign} h$. Hence $G$ has either a strict minimum or a strict maximum point (or both), and we conclude.
\end{proof}
Also in this case we can modify the assumptions on $h$ and replace them by some integrability condition. However, the solution $Q$ no longer decays exponentially fast at infinity, and we must be more precise.
\begin{theorem}
Suppose that $0<s<1$, $0<p<p^\dag$,
and let $h \in L^\theta(\mathbb{R})$ with
\[
\theta =
\begin{cases}
\frac{2}{p+2s(p-2)} &\text{if $0<s<\frac{1}{2}$} \\
\text{any number} &\text{if $\frac{1}{2} \leq s <1$}.
\end{cases}
\] 
If $h$ does not change sign, then, for
every $\ge$ 
sufficiently small, equation (\ref{eq:1}) has a non-trivial solution $u_\ge
\simeq Q(\cdot - \bar{\theta})$, for some $\bar{\theta} \in \mathbb{R}$.
\end{theorem}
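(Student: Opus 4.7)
The argument parallels Theorem~\ref{th:2.4} closely, the only genuinely new issue being the weaker (polynomial rather than exponential) decay of $Q$ from Theorem~\ref{th:2.1}(iii). By Proposition~\ref{prop:1.2}, it is enough to exhibit a stable critical point of the reduced functional
\[
G(\xi) = -\frac{1}{p+2}\int_{\mathbb{R}} h(x) Q(x-\xi)^{p+2}\,dx, \qquad \xi \in \mathbb{R}.
\]
Since $h$ has constant sign and $Q>0$, $G$ has constant sign and is not identically zero; hence it suffices to prove $G(\xi) \to 0$ as $|\xi| \to +\infty$, which forces $G$ to attain a strict global maximum or minimum and supplies the stable critical point required by Proposition~\ref{prop:1.2}.

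Setting $\widetilde{Q} = Q^{p+2}$ and decomposing $G(\xi) = G_1(R,\xi) + G_\infty(R,\xi)$ over $\{|x|<R\}$ and $\{|x|\geq R\}$, H\"older's inequality with exponents $\theta$ and $\theta^\prime = \theta/(\theta-1)$ yields
\[
|G_\infty(R,\xi)| \leq \|\widetilde{Q}\|_{L^{\theta^\prime}} \left(\int_{|x|\geq R} |h|^\theta\right)^{1/\theta}, \qquad |G_1(R,\xi)| \leq \|h\|_{L^\theta} \left(\int_{|y+\xi|<R} \widetilde{Q}(y)^{\theta^\prime}\,dy\right)^{1/\theta^\prime}.
\]
Given $\epsilon>0$ I first pick $R$ so large that the first bound is $<\epsilon/2$, which is possible because $h \in L^\theta$; for that fixed $R$, the second bound is $<\epsilon/2$ once $|\xi|$ is sufficiently large, since $\widetilde{Q}^{\theta^\prime} \in L^1(\mathbb{R})$ and $\{y : |y+\xi|<R\}$ recedes to infinity. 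Summing gives $|G(\xi)|<\epsilon$ for all large $|\xi|$.

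The one step that is not a direct transcription of Theorem~\ref{th:2.4} is the verification that $\widetilde{Q} \in L^{\theta^\prime}(\mathbb{R})$, and this is where the explicit formula for $\theta$ comes in; I expect this to be the main point of the proof. From the decay estimate of Theorem~\ref{th:2.1}(iii), $\widetilde{Q}(x) \leq C (1+|x|^{2s+1})^{-(p+2)}$, so $\widetilde{Q} \in L^{r}(\mathbb{R})$ precisely when $r(2s+1)(p+2)>1$. For $0<s<1/2$ a short computation gives $\theta^\prime = 2/[(1+2s)(2-p)]$, whence $\theta^\prime (2s+1)(p+2) = 2(p+2)/(2-p) > 1$; for $1/2 \leq s < 1$ the decay exponent $2s+1 \geq 2$ makes the required integrability automatic for any admissible $\theta$. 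Once $\widetilde{Q} \in L^{\theta^\prime}$ is established, Proposition~\ref{prop:1.2} delivers the solution $u_\ge \simeq Q(\cdot - \bar{\theta})$ for every sufficiently small $\ge$.
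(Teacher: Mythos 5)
Your argument has the same skeleton as the paper's proof (reduce to the scheme of Theorem \ref{th:2.4}, so that the only new point is $\widetilde{Q}=Q^{p+2}\in L^{\theta^\prime}(\mathbb{R})$), but you justify that one point by a genuinely different route. The paper gets it from the Sobolev embedding of Lemma \ref{lem:3.1}: the exponent $\theta$ is chosen so that $(p+2)\theta^\prime$ equals the critical exponent $2_s^\star=\tfrac{2}{1-2s}$, whence $Q^{(p+2)\theta^\prime}\in L^1$ because $Q\in H^s\hookrightarrow L^{2_s^\star}$. (Incidentally, the condition $\theta^\prime(p+2)=\tfrac{2}{1-2s}$ yields $\theta=\tfrac{2}{2s(p+2)-p}$, not $\tfrac{2}{p+2s(p-2)}$; the statement's formula appears to carry a sign typo, which your computation of $\theta^\prime=2/[(1+2s)(2-p)]$ inadvertently exposes, since it disagrees with the paper's $\theta^\prime=2/[(1-2s)(p+2)]$.) You instead invoke the pointwise decay $|Q(x)|\le C(1+|x|^{2s+1})^{-1}$ of Theorem \ref{th:2.1}(iii). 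This actually buys strictly more than you claim: since $(2s+1)(p+2)>2$, the bound $\widetilde{Q}(x)\lesssim |x|^{-(2s+1)(p+2)}$ already gives $\widetilde{Q}\in L^1\cap L^\infty$, hence $\widetilde{Q}\in L^r$ for \emph{every} $r\ge 1$, so the theorem holds for any $\theta>1$ and no formula for $\theta$ is needed at all. By not noticing this, you instead substituted the stated $\theta$ into your criterion $r(2s+1)(p+2)>1$ and verified $2(p+2)/(2-p)>1$, which tacitly assumes $p<2$ and $\theta^\prime>1$; neither holds throughout $0<p<p^\dag$ (for $s>1/4$ one has $p^\dag>2$), so as written your verification does not cover the whole stated range. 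This defect is inherited from the typo in the statement rather than from your method: observing that your decay criterion holds for every $\theta^\prime\ge 1$ both repairs the gap and strengthens the theorem, whereas the paper's embedding-based argument is tied to the specific critical exponent.
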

\begin{proof}
The proof is similar to that of Theorem \ref{th:2.4}. However, when applying H\"{o}lder's inequality, we must be sure that
\[
\int_{\mathbb{R}^3} \left| Q(x) \right|^{(p+2)\theta^\prime}\, dx < \infty,
\]
where $1/\theta+1/\theta^\prime =1$.
This is true if
\[
\theta^\prime (p+2) = \frac{2}{1-2s}, \quad \text{if $0<s<\frac{1}{2}$},
\]
which boils down to 
\[
\theta = \frac{2}{p+2s(p-2)} \quad\text{if $0<s<\frac{1}{2}$}.
\]
The case $1/2 \leq s <1$ is easier.
\end{proof}

\begin{remark}
Of course stable critical points of $G$ may also occur under different assumptions on $h$. Moreover, different stable critical points of $G$ give rise to different solutions of (\ref{eq:1}); if we know that $G$ has two (ore more) stable critical points, then our equation will have two (or more) solutions.
\end{remark}

\section{Comments and perspectives}

\begin{itemize}
\item It is easy to use the regularity estimates of \cite{CZNrend,CZN} and \cite{FQT} to prove that our solutions have additional regularity.
\item Although obtained by rather easy considerations, we believe that our
existence results are new and cannot be easily recovered by adapting the corresponding
methods for the unperturbed problems. We wish to remark that, to the best of our knowledge, for fractional operators there is no precise analysis of loss of compactness in the existing literature.
In the recent preprint \cite{Mu} the author's assumptions allow non-constant potential functions, but they should be radially symmetric. We do not expect our solutions to be necessarily invariant under rotations.
\item Unlike the operator $\sqrt{-\Delta +m^2}$, the fractional laplacian $(-\Delta)^s$ scales in a standard way: under a dilation $x \mapsto \epsilon x$, the fractional laplacian becomes $\epsilon^{2s}(-\Delta)^s$. Therefore it is tempting to investigate the \emph{singularly perturbed} equation
\begin{equation} \label{eq:9}
\epsilon^{2s}(-\Delta)^s + V(x)u = |u|^p u,
\end{equation}
where $V \colon \mathbb{R} \to \mathbb{R}$ is ax external potential function. When $s=1$, i.e. when the fractional laplacian reduces to the local Laplace operator, equations like (\ref{eq:9}) appear in a lot of papers. Roughly speaking, \emph{single-peak} solutions, i.e. solutions that concentrate at some point as $\epsilon \to 0$, are generated by ``good'' critical points of $V$. These solutions can also be discovered by a suitable modification of the perturbation developed in \cite{ABC} and generalized in \cite{AMS}. See also \cite{AM} for a survey. However, it seems that the slow decay at infinity of solutions to $(-\Delta)^s u + u = |u|^{p}u$ is a severe obstruction. We believe that the analysis of singularly perturbed problems for non-local operators should be pursued further.
\item Another interesting issue is to extend the results of non-degeneracy 
for the fractional laplacian to higher dimensions. The main ingredient for our
approach to work again is a
version of Theorem \ref{th:2.2} in general dimension $N>1$. The proof of
\cite{FL} is heavily based on results about the number of zeroes of an
eigenfunction corresponding to the second eigenvalue for operators like
$(-\Delta)^s+V$, where $V$ is a suitable potential (see the comments in
\cite{F}). This approach cannot be immediately generalized to any space
dimension.
\end{itemize}

    \nocite{*}
\bibliographystyle{plain}
\bibliography{pert}

\par\bigskip\noindent {\scshape Dipartimento di Matematica ed Applicazioni, Universit\`a di Milano-Bicocca, via Cozzi 53, I-20125 Milano. Email:} \href{mailto:simone.secchi@unimib.it}{simone.secchi@unimib.it}
    \end{document}